\newtheorem{thm}{Theorem}
\newtheorem{lem}{Lemma}
\newtheorem*{ack}{Acknowledgements}
\numberwithin{equation}{section}
\begin{document}

\title{On the saturation number for singular cubic surfaces}

\author{Yuchao Wang}

\address{
Department of Mathematics, Shanghai University, Shanghai, 200444, China}
\email{yuchaowang@shu.edu.cn}

\author{Weili Yao}

\address{
Department of Mathematics, Shanghai University, Shanghai, 200444, China}
\email{yaoweili@shu.edu.cn}

\begin{abstract}
We investigate the distribution of rational points on singular cubic surfaces, whose coordinates have few prime factors. The key tools used are universal torsors, the circle method and results on linear equations in primes.
\end{abstract}

\keywords{circle method, universal torsors, cubic surfaces, prime}

\subjclass[2020]{11D25, 11P32, 11P55}

\maketitle

\section{Introduction}
The study of prime solutions of Diophantine equations is a topic of central importance in number theory. One example is Vinogradov's celebrated proof of the three primes theorem \cite{Vino} via the circle method. Subsequently, Hua \cite{Hua} extended this approach to investigate the problem of representing integers by powers of primes. For more applications to nonlinear diagonal equations, see the work of Kawada and Wooley \cite{KW} and Zhao \cite{Zhao} for further developments and references in the Waring-Goldbach problem. Moreover, Liu \cite{Liu} and Zhao \cite{Zhao2} considered prime solutions of one indefinite quadratic equation, which is not necessarily diagonal. For more general cases, Cook and Magyar \cite{CM} established the asymptotic estimate for the number of prime solutions of general systems of Diophantine equations, under the assumption that the number of variables is considerably large compared with the degrees of the polynomials. For developments in this topic, one may refer to the recent work of Yamagishi \cite{Y1}, Liu and Zhao \cite{LiuZhao} and Green \cite{Green}.

In the analysis of prime solutions of Diophantine equations via the circle method, a large number of variables are required, due to the barrier for cancellation in the estimate of exponential sums. Thus when there are fewer variables, we may focus on integral solutions with few prime factors. One significant contribution in this realm is the result by Chen \cite{Chen} on the binary Goldbach conjecture, obtained by using sieve methods. More generally, Bourgain, Gamburd and Sarnak \cite{BGS} raised the question of whether rational points or integral points with almost prime coordinates form a Zariski dense subset in suitable varieties. Bourgain, Gamburd and Sarnak \cite{BGS} and Nevo and Sarnak \cite{NS} considered such points on orbits of congruence subgroups of semi-simple groups acting linearly on affine space. Furthermore, Liu and Sarnak \cite{LS} and Hong and Kontorovich \cite{HK} investigated integral points whose coordinates have few prime factors on certain affine quadrics. However, these results do not cover the case of cubic surfaces considered here.

Given an irreducible cubic surface $X\subset \mathbb{P}^3$, one may find an absolutely irreducible cubic form $C(x_0,x_1,x_2,x_3)\in \mathbb{Z}[x_0,x_1,x_2,x_3]$ such that $X$ is defined by the equation $C(x_0,x_1,x_2,x_3)=0$. A cubic surface $X$ is singular if and only if
\begin{equation*}
\frac{\partial C}{\partial x_0}=\frac{\partial C}{\partial x_1}=\frac{\partial C}{\partial x_2}=\frac{\partial C}{\partial x_3}=0
\end{equation*}
has nonzero solutions.

In this paper, we are concerned with the distribution of almost prime points on certain singular cubic surfaces, whose universal torsors are open subsets of hypersurfaces in affine space. The classification of singular cubic surfaces was done by Schl\"{a}fli \cite{Sch} and Cayley \cite{Cayley}, and one may refer to Bruce and Wall \cite{BW} for a modern treatment. There are a finite number of types, for which we may describe each type by the types of the $\mathbf{ADE}$-singularities and the number of lines lying in the corresponding singular cubic surfaces. It follows from Theorem 2 of Derenthal \cite{Der} that there are seven types of singular cubic surfaces whose Cox rings have a minimal set of $10$ generators with one relation. Note that there are two isomorphy classes of cubic surfaces with $\mathbf{D}_4$ singularity type, and we let $X_1$ and $X_2$ denote these two isomorphy classes. For $3\leq i\leq8$, we write $X_i$ for cubic surfaces with singularities of types $\mathbf{A}_3+2\mathbf{A}_1$, $2\mathbf{A}_2+\mathbf{A}_1$, $\mathbf{A}_4+\mathbf{A}_1$, $\mathbf{D}_5$, $\mathbf{A}_5+\mathbf{A}_1$ and $\mathbf{E}_6$, respectively. We remark that the equation $C(x_0,x_1,x_2,x_3)=0$ defining each cubic surface is not unique, and we will use the defining equations listed in Section 3.5 of Derenthal \cite{Der}.

Let $P_r$ denote an $r$-almost prime, which is a nonzero integer with at most $r$ prime factors, counted with multiplicity. Moreover, we let $\mathbb{Z}^4_{\text{prim}}$ be the set of vectors $\mathbf{x}=(x_0,x_1,x_2,x_3)\in\mathbb{Z}^4$ satisfying $\text{gcd}(x_0,x_1,x_2,x_3)=1$.  For any given cubic surface $X$, we define the saturation number $r(X)$ to be the least natural number $r$ such that the set of $\mathbf{x}\in \mathbb{Z}^4_{\text{prim}}$, for which $[\mathbf{x}]\in X$ and $x_0x_1x_2x_3=P_r$, is Zariski dense in the cubic surface $X$. This definition is essentially due to Bourgain, Gamburd, and Sarnak \cite{BGS}.

Since we are seeking for the set of almost prime points which form a Zariski dense subset in the cubic surface, it is not enough to only work with such points lying on finitely many curves contained in the surface. On writing $U_i$ for the complement of the lines in each surface $X_i$, we may restrict our attention to the open subset $U_i\subset X_i$, for $1\leq i\leq 8$. In order to prove that a set of $[\mathbf{x}]\in X_i$ is Zariski dense, it suffices to show that for any given $\varepsilon>0$ and any $\boldsymbol{\xi}_i\in\mathbb{R}^4$ with $[\boldsymbol{\xi}_i]\in U_i$, there exists $B\in \mathbb{N}$ sufficiently large and at least one point $[\mathbf{x}]$ in the set, such that
\begin{equation*}
\Big|\frac{\mathbf{x}}{B}-\boldsymbol{\xi}_i\Big|< \varepsilon.
\end{equation*}
Here we use $|\mathbf{y}|$ to denote the maximum of the absolute values of each component of the vector $\mathbf{y}$. 

The aim of this paper is to establish the upper bounds for the saturation numbers $r(X_i)$, for $1\leq i\leq 8$. The following is the main result.
\begin{thm}
\label{theorem}
Let $1\leq i\leq 8$ and $[\boldsymbol{\xi}_i]$ be a real point on $U_i$. Set $r_1=r_6=r_7=12$, $r_2=r_4=r_5=13$, $r_3=14$ and $r_8=29$. For any $\varepsilon>0$, we define
\begin{equation*}
M_{U_i}(\boldsymbol{\xi}_i,\varepsilon,B,r)=\# \left\{\mathbf{x}\in\mathbb{Z}^4_{\text{\em{prim}}}:
\begin{aligned}
&[\mathbf{x}]\in U_i,\,\Big|\frac{\mathbf{x}}{B}-\boldsymbol{\xi}_i\Big|<\varepsilon,\\
&x_0x_1x_2x_3=P_{r}
\end{aligned}
\right\}.
\end{equation*}
Then for sufficiently large $B$, we have
\begin{equation*}
M_{U_i}(\boldsymbol{\xi}_i,\varepsilon,B,r_i)\gg B(\log B)^{-4},
\end{equation*}
for $i\neq 2,8$ and
\begin{equation}
\label{eq2}
M_{U_2}(\boldsymbol{\xi}_2,\varepsilon,B,r_2)\gg B(\log B)^{-5},
\end{equation}
\begin{equation*}
M_{U_8}(\boldsymbol{\xi}_8,\varepsilon,B,r_8)\gg B^{\frac{3}{7}}(\log B)^{-4}.
\end{equation*}
The implicit constants may depend on $\boldsymbol{\xi}_i$ and $\varepsilon$. Therefore, we establish the upper bounds for the saturation numbers $r(X_i)$, as shown in Table 1.
\begin{table}[!hbp]
\begin{tabular}{c c c }
\hline
Surfaces&Singularities& Upper bounds for $r(X_i)$   \\
\hline
$X_1$&$\mathbf{D}_4$, first isomorphy class & $12$  \\
$X_2$&$\mathbf{D}_4$, second isomorphy class & $13$  \\
$X_3$&$\mathbf{A}_3+2\mathbf{A}_1$ & $14$  \\
$X_4$&$2\mathbf{A}_2+\mathbf{A}_1$ & $13$ \\
$X_5$&$\mathbf{A}_4+\mathbf{A}_1$ & $13$ \\
$X_6$&$\mathbf{D}_5$& $12$\\
$X_7$&$\mathbf{A}_5+\mathbf{A}_1$&$12$\\
$X_8$&$\mathbf{E}_6$&$29$\\
\hline
\end{tabular}
\caption{Upper bounds for the saturation numbers}
\end{table}
\end{thm}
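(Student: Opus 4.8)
The plan is to exploit the \emph{universal torsor} parametrization of each surface $X_i$: Derenthal's work gives, for each $i$, an explicit embedding of the torsor as an open subset of a hypersurface in affine space (with $10$ Cox-ring generators and one relation), together with the map $\pi_i$ back down to $X_i$ expressing the original coordinates $x_0,\dots,x_3$ as monomials in the torsor coordinates. The strategy is to count torsor points, all of whose \emph{relevant} coordinates (the ones that actually appear in the monomials $x_0x_1x_2x_3$) are prime, and all of whose remaining coordinates are $1$ or restricted to a tiny box. Such a choice forces $x_0x_1x_2x_3$ to be a product of primes; counting the total multiplicity of the monomial expression for each $x_j$ then yields the stated $r_i$. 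Thus the first step is, surface by surface, to read off from Section 3.5 of \cite{Der} the torsor equation and the monomial map, to decide which torsor variables to fix and which to let range over primes in dyadic-type ranges scaled so that $\mathbf{x}/B$ lands in the $\varepsilon$-ball around $\boldsymbol{\xi}_i$, and to compute the resulting number $r_i$ of prime factors of $x_0x_1x_2x_3$ (this bookkeeping is what produces the list $r_1=r_6=r_7=12$, etc.).

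The analytic heart is then to count prime solutions of the torsor equation after the fixing: in each case the torsor relation, once the non-prime variables are specialized, becomes a single equation (typically of the shape $q_1\cdots = q_2\cdots + \cdots$, or after clearing, a linear or low-degree equation) in the prime variables that are allowed to move. When the equation is linear in the moving prime variables, I would invoke the Green--Tao--Ziegler machinery on linear equations in primes (cited via \cite{Green} and the surrounding references) to get an asymptotic with the expected singular series; when it is a genuinely nonlinear but highly nondegenerate equation with enough variables, I would run the circle method with the major arcs handled by the standard prime-variable Hardy--Littlewood analysis and the minor arcs by Weyl differencing together with Vinogradov-type bounds for exponential sums over primes. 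The local densities must be shown positive, which for each explicit surface reduces to checking that the specialized equation has nonsingular local solutions with the sign and congruence constraints imposed by the box around $\boldsymbol{\xi}_i$; the real density is positive precisely because $[\boldsymbol{\xi}_i]\in U_i$ lies off the lines, so the torsor map is a local diffeomorphism there. Assembling the count over the allowed prime ranges and the fixed variables gives a main term of order $B(\log B)^{-k}$, with $k$ equal to the number of moving prime variables after one applies the sieve weight / von Mangoldt normalization --- this explains the $(\log B)^{-4}$ in most cases, the extra $\log$ in \eqref{eq2} for $X_2$, and the reduced exponent $B^{3/7}$ for $X_8$, whose $\mathbf{E}_6$ torsor forces a more lopsided specialization.

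Two points deserve care. First, I am producing points in the open set $U_i$, not merely in $X_i$: one has to check that the torsor points constructed do not map into the lines of $X_i$, which amounts to verifying that certain coordinate products are nonzero --- automatic here since all the moving coordinates are primes and the fixed ones are nonzero. Second, the primitivity condition $\gcd(x_0,x_1,x_2,x_3)=1$ and the coprimality conditions built into the torsor (the Cox ring lives over the characteristic space, so various pairs of torsor coordinates must be coprime) have to be maintained; choosing the moving variables to be \emph{distinct} primes in disjoint ranges, and the fixed variables to be $1$, makes all these coprimalities transparent, at the cost of inserting the coprimality conditions as congruence constraints in the circle-method / linear-equations-in-primes input, which only affects the singular series by a positive constant factor.

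The main obstacle I anticipate is \textbf{the minor-arc (or, for the linear cases, the equidistribution) estimate for the surfaces with the fewest moving prime variables}, since the whole point of the torsor method is to trade a cubic in four variables for something lower-degree but with correspondingly few variables, and prime exponential sums lose more to the major-arc analysis than ordinary ones. For $X_8$ in particular the specialized $\mathbf{E}_6$ relation is the tightest, which is why only $B^{3/7}$ is attainable and why $r_8=29$ is so much larger --- there one cannot afford a full circle-method treatment and instead pushes almost all coordinates into prime monomials, accepting a large number of prime factors in exchange for a count one can actually establish. Verifying that each of the eight specialized equations genuinely has enough well-behaved variables for the chosen method --- and squeezing the $\log$-power and the $B$-exponent to exactly the claimed values --- is where essentially all the work lies.
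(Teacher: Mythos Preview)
Your proposal is correct and follows essentially the same route as the paper: specialize most of the ten torsor variables to $1$ so that the torsor relation collapses to a single linear or low-degree equation in three or four prime variables (plus, in a couple of cases, one free prime), then count via Green--Tao for the purely linear cases and via the circle method for the two surfaces ($X_6,X_7$) where the reduced relation is $2\beta_1p_1+\beta_2p_2^2+\beta_3p_3p_4=0$. The paper's execution differs from your sketch only in minor bookkeeping: for $X_2$ the reduced parametrization has $x_3=\eta_1^2(\eta_1+\eta_{10})$, so an \emph{additional} linear constraint $\beta_1p_1+2\beta_4p_4=\beta_5p_5$ is imposed to force $\eta_1+\eta_{10}$ itself to be prime, giving a genuine $2\times 5$ Green--Tao system and hence the fifth logarithm; and for $X_8$ the monomial degrees in the $\mathbf{E}_6$ torsor (up to sixth powers) force all four primes to live near $B^{1/7}$, so the count from the three-term linear equation together with the independent prime $\tilde\eta_6$ is $\asymp B^{3/7}(\log B)^{-4}$, exactly as stated.
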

Results of this type were first considered by Wang \cite{Wang}, in which the upper bounds for the saturation numbers for the Fermat cubic surface and the Cayley cubic surface are obtained. Note that the Fermat cubic surface is smooth, while the Cayley cubic surface has singularity type $4\mathbf{A}_1$. Subsequently, Sofos and Wang \cite{SW} proved that any smooth projective variety has finite saturation number, provided that it is $\mathbb{Q}$-unirational. However, the explicit upper bound for the saturation number is rather large for general cases.

It is of interest to compare our result with the density of rational points on cubic surfaces. Set
\begin{equation*}
N_{U}(B)=\#\{\mathbf{x}\in \mathbb{Z}^4_{\text{prim}}:\,[\mathbf{x}]\in U,\,\max |x_i|\leq B \},
\end{equation*}
which counts the number of rational points of bounded height on cubic surfaces. Manin's conjecture (see Batyrev and Manin \cite{BM}) predicts that there exists some suitable positive constant $c_X$ such that
\begin{equation*}
N_{U}(B)\sim c_XB(\log B)^{\rho_X-1},
\end{equation*}
as $B\rightarrow \infty$, where $\rho_X$ is the rank of the Picard group of the cubic surface $X$. For the cubic surface with $\mathbf{D}_4$ singularity type, Browning \cite{Bro} achieved the upper and lower bound for the number of rational points of bounded height, for which the order of magnitude agrees with the prediction. Later, Le Boudec \cite{LeB} proved Manin's conjecture for the cubic surface of this type. Moreover, for cubic surfaces with singularities of types $2\mathbf{A}_2+\mathbf{A}_1$, $\mathbf{D}_5$, $\mathbf{A}_5+\mathbf{A}_1$ and $\mathbf{E}_6$, Manin's conjecture is established by Le Boudec \cite{LeB2},  Browning and Derenthal \cite{BD}, Baier and Derenthal \cite{BaiD} and de la Bret\`{e}che, Browning and Derenthal \cite{BBD}, respectively. One may refer to Frei \cite{Frei} for additional references. However, Manin's conjecture is still open for several singular cubic surfaces, including cubic surfaces with singularities of types $\mathbf{A}_3+2\mathbf{A}_1$ and $\mathbf{A}_4+\mathbf{A}_1$.

The proof of our main result relies on convenient parametrisations of rational points on singular cubic surfaces $X_1,\dots,X_8$. We first apply the theory of universal torsors to specify certain integral solutions in particular forms, whose coordinates have few prime factors. Then we establish a lower bound for the number of such solutions which are close to some fixed real solution via the circle method and results on linear equations in primes due to Green and Tao \cite{GT}. More precisely, we use the circle method to count the number of solutions to the equation
\begin{equation}
\label{F1}
2\beta_1 p_1+\beta_2p_2^2+\beta_3p_3p_4=0,
\end{equation}
and apply results on linear equations in primes to investigate the solutions to the equations
\begin{equation}
\label{F2}
2\beta_1 p_1+\beta_2p_2+\beta_3p_3=0,
\end{equation}
\begin{equation}
\label{F3}
\beta_1 p_1+\beta_2p_2+\beta_3p_3+\beta_4p_4=0
\end{equation}
and
\begin{equation}
\label{F4}
    \left\{\begin{array}{ll}
&\beta_1p_1+2\beta_4p_4-\beta_5p_5=0, \\
& \beta_2p_2+\beta_3p_3+2\beta_4p_4=0,
    \end{array}\right. \\
\end{equation}
where $p_j$ are primes lying in specified intervals and $\beta_j\in\{1,-1\}$, for $1\leq j\leq5$. We remark that we may also use the circle method to investigate the equations (\ref{F2})--(\ref{F4}). However, it will be more straightforward to apply the results due to Green and Tao \cite{GT} instead. The result for $r(X_8)$ seems worse than others, since we do not get parametrisations of rational points, which yields a better upper bound for $r(X_8)$ and can be dealt with by using the circle method.

For singular cubic surfaces $X_1,\dots,X_{8}$, we define $\tilde{r}(X_i)$ to be the least natural number $\tilde{r}$ such that the set of $\mathbf{x}\in \mathbb{Z}^4_{\text{prim}}$ for which $[\mathbf{x}]\in X_i$ and the product $x_0x_1x_2x_3$ has at most $\tilde{r}$ \emph{distinct} odd prime factors, is Zariski dense in $X_i$. Then it is worth pointing out that our methods give $\tilde{r}(X_2)\leq 5$ and $\tilde{r}(X_i)\leq 4$ for $i\neq2$. This corresponds to the exponents of $\log B$ in the lower bounds for $M_{U_i}(\boldsymbol{\xi}_i,\varepsilon,B,r_i)$ in Theorem \ref{theorem}.

Browning and Gorodnik \cite{BG} defined the permeation number to investigate power-free values of polynomials on symmetric varieties. In a similar manner, for any cubic surface $X$, we may define the permeation number $r^{\Box}(X)$ to be the least natural number $r'$ such that the set of $\mathbf{x}\in \mathbb{Z}^4_{\text{prim}}$, for which $[\mathbf{x}]\in X$ and $x_0x_1x_2x_3$ is $r'$-free, is Zariski dense in the cubic surface $X$. We remark that our methods may be applied to obtain the upper bound for $r^{\Box}(X_i)$ with $1\leq i\leq 8$. 

An outline of this paper is as follows. We devote Section 2 to obtaining parametrisations of rational points on the singular cubic surfaces by using the theory of universal torsors. In Section 3, we employ the circle method to solve the equation (\ref{F1}) in prime variables which lie in certain intervals. In Section 4, we apply results on linear equations in primes to investigate the solutions to the equations (\ref{F2})--(\ref{F4}). The proof of Theorem \ref{theorem} is completed in Section 5.

Throughout the paper, we let the letter $p$, with or without indices, be reserved for prime numbers. Let $\varepsilon$ represent a small positive constant, not necessarily the same in all occurrences. As usual, let $\mu(n)$, $\varphi(n)$ and $d(n)$ denote the M\"{o}bius function, Euler's totient function and the divisor function, respectively. For any $j$, we let $\beta_j\in\{-1,1\}$. We also write $e(\alpha)=e^{2\pi i \alpha}$ and $(a,b)=\text{gcd}(a,b)$.

\begin{ack}
\emph{The authors are grateful to the referees for carefully reviewing the paper and various suggestions.}
\end{ack}

\section{The universal torsor}
Universal torsors were originally introduced by Colliot-Th\'{e}l\`{e}ne and Sansuc to investigate the Hasse principle and weak approximation for rational varieties. The parametrisations of rational points obtained by universal torsors have been widely used in the context of Manin's conjecture to study the asymptotic behavior of the number of rational points of bounded height on Fano varieties.

In this section, we use a passage to the universal torsor for the singular cubic surfaces $X_1,\dots,X_8$. Details can be found in Section 3.5 of Derenthal \cite{Der} and we employ the same notations used in \cite{Der}. We remark that we are not working with the full universal torsor and only focus on convenient parametrisations of rational points, which form a Zariski dense subset in the cubic surface.

Type $\mathbf{D}_4$, first isomorphy class. The surface $X_1$ is defined by the equation
\begin{equation}
\label{D4}
x_0(x_1+x_2+x_3)^2-x_1x_2x_3=0.
\end{equation}
Browning \cite{Bro} and Le Boudec \cite{LeB} considered the distribution of rational points on the cubic surface $X_1$ by using the theory of universal torsors and analytic tools. It is shown in Section 3 of Le Boudec \cite{LeB} that the universal torsor for the cubic surface $X_1$ is an open subvariety in $\mathbb{A}^{10}=\text{Spec}\mathbb{Z}[\eta_1,\dots,\eta_{10}]$, defined by the equation
\begin{equation}
\label{torsor}
\eta_2\eta_5^2\eta_8+\eta_3\eta_6^2\eta_9+\eta_4\eta_7^2\eta_{10}-\eta_1\eta_2\eta_3\eta_4\eta_5\eta_6\eta_7=0,
\end{equation}
where $\eta_1,\dots,\eta_7>0$, $\eta_8\eta_9\eta_{10}\neq0$ and $\eta_1,\dots,\eta_{10}$ satisfy necessary coprimality conditions (3.2)--(3.7) in \cite{LeB}. Then the vector given by
\begin{equation*}
(\eta_8\eta_9\eta_{10},\eta_1^2\eta_2^2\eta_3\eta_4\eta_5^2\eta_8,\eta_1^2\eta_2\eta_3^2\eta_4\eta_6^2\eta_9,\eta_1^2\eta_2\eta_3\eta_4^2\eta_7^2\eta_{10})
\end{equation*}
is a primitive integral solution of the equation (\ref{D4}). Now we fix $\eta_2=\dots=\eta_7=1$, then the relation (\ref{torsor}) becomes
\begin{equation}
\label{torsor2}
\eta_8+\eta_9+\eta_{10}-\eta_1=0
\end{equation}
with $\eta_1>0$, $\eta_8\eta_9\eta_{10}\neq0$ and $(\eta_1,\eta_8)=(\eta_1,\eta_9)=(\eta_1,\eta_{10})=1$. Moreover, we set
\begin{equation*}
\mathbf{x}=(x_0,x_1,x_2,x_3)=(\eta_8\eta_9\eta_{10},\eta_1^2\eta_8,\eta_1^2\eta_9,\eta_1^2\eta_{10})
\end{equation*}
with $\eta_1,\eta_8,\eta_9$ and $\eta_{10}$ satisfying (\ref{torsor2}). Then we have $[\mathbf{x}]\in X_1$.

Arguing similarly, we may get parametrisations of certain rational points on each cubic surface $X_i$, for $2\leq i\leq 8$. We omit the details and only give the sketch.

Type $\mathbf{D_4}$, second isomorphy class. The defining equation for the surface $X_2$ is given by
\begin{equation*}
x_0(x_1+x_2+x_3)^2+x_1x_2(x_1+x_2)=0.
\end{equation*}
We fix
\begin{equation}
\label{para2}
\mathbf{x}=(x_0,x_1,x_2,x_3)=(\eta_8\eta_9\eta_{10},\eta_1^2\eta_8,\eta_1^2\eta_9,\eta_1^2(\eta_1+\eta_{10}))
\end{equation}
with $\eta_8+\eta_9+\eta_{10}=0$. We see that $[\mathbf{x}]$ lies on $X_2$.

Type $\mathbf{A_3+2A_1}$. The surface $X_3$ is defined by the equation
\begin{equation*}
x_3^2(x_1+x_2)+x_0x_1x_2=0.
\end{equation*}
Write
\begin{equation*}
\mathbf{x}=(x_0,x_1,x_2,x_3)=(\eta_1\eta_{10}^2,\eta_4^2\eta_8,\eta_1\eta_4\eta_8,\eta_1\eta_4\eta_{10})
\end{equation*}
with $\eta_1+\eta_4+\eta_{8}=0$.  Thus we obtain $[\mathbf{x}]\in X_3$.

Type $\mathbf{2A_2+A_1}$. The equation defining the surface $X_4$ is given by
\begin{equation*}
x_3^2(x_1+x_3)+x_0x_1x_2=0.
\end{equation*}
On writing
\begin{equation*}
\mathbf{x}=(x_0,x_1,x_2,x_3)=(\eta_5\eta_{6}^2,\eta_2\eta_5^2,\eta_2^2\eta_{10},\eta_2\eta_5\eta_{6}),
\end{equation*}
where $\eta_5+\eta_6+\eta_{10}=0$, we get $[\mathbf{x}]\in X_4$.

Type $\mathbf{A_4+A_1}$. For the surface $X_5$, the defining equation is
\begin{equation*}
x_2x_3^2+x_1^2x_3+x_0x_1x_2=0.
\end{equation*}
Fix
\begin{equation*}
\mathbf{x}=(x_0,x_1,x_2,x_3)=(\eta_5\eta_9\eta_{10},\eta_2^2\eta_5,\eta_2^3,\eta_2\eta_5\eta_{9})
\end{equation*}
with $\eta_5+\eta_9+\eta_{10}=0$. Then we have $[\mathbf{x}]\in X_5$.

Type $\mathbf{D_5}$. The surface $X_6$ is defined by the equation
\begin{equation*}
x_3x_0^2+x_0x_2^2+x_1^2x_2=0.
\end{equation*}
The point $\mathbf{x}$ defined by
\begin{equation*}
\mathbf{x}=(x_0,x_1,x_2,x_3)=(\eta_4^3,\eta_4^2\eta_9,\eta_4^2\eta_8,\eta_8\eta_{10})
\end{equation*}
with $\eta_{10}+\eta_4\eta_8+\eta_{9}^2=0$ satisfies $[\mathbf{x}]\in X_6$.

Type $\mathbf{A_5+A_1}$. The defining equation for the surface $X_7$ is given by
\begin{equation*}
x_1^3+x_2x_3^2+x_0x_1x_2=0.
\end{equation*}
We write
\begin{equation*}
\mathbf{x}=(x_0,x_1,x_2,x_3)=(\eta_6\eta_{10},\eta_6\eta_{8},\eta_8^3,\eta_6\eta_{9}),
\end{equation*}
where $\eta_{6}+\eta_{9}^2+\eta_8\eta_{10}=0$. Then we obtain $[\mathbf{x}]\in X_7$.

Type $\mathbf{E_6}$. The equation
\begin{equation*}
x_1x_2^2+x_2x_0^2+x_3^3=0
\end{equation*}
defines the surface $X_8$. We set
\begin{equation*}
\mathbf{x}=(x_0,x_1,x_2,x_3)=(\eta_2^2\eta_{3}^2\tilde{\eta}_{6}^3,\tilde{\eta}_{6}^6\eta_{10},\eta_2^3\eta_3^4,\eta_2^2\eta_3^3\tilde{\eta}_{6}^2)
\end{equation*}
with $\eta_{10}+\eta_{2}+\eta_3=0$. It is easy to check that $[\mathbf{x}]$ lies on $X_8$.  We remark that $\tilde{\eta}_{6}$ corresponds to $\eta_6^{-1}$ in \cite{Der}.

So far, we establish parametrisations of rational points on each singular cubic surface $X_i$.

\section{The circle method}
In this section, we apply the circle method, combined with some ideas of Liu \cite{Liu} to investigate prime solutions to the equation (\ref{F1}). Note that the equation (\ref{F1}) is not linear and there is one term which is not diagonal.

Suppose that the polynomial  $F_1(t_1,t_2,t_3,t_4)\in\mathbb{Z}[t_1,t_2,t_3,t_4]$ is defined by
\begin{equation*}
F_1(t_1,t_2,t_3,t_4)=2\beta_1 t_1+\beta_2t_2^2+\beta_3t_3t_4.
\end{equation*}
Let $\gamma_1,\dots,\gamma_4$ be fixed positive real numbers satisfying
\begin{equation*}
F_1(\gamma_1,\gamma_2,\gamma_3,\gamma_4)=2\beta_1\gamma_1+\beta_2\gamma_2^2+\beta_3\gamma_3\gamma_4=0.
\end{equation*}
We write
\begin{equation*}
I_1=[\gamma_1(1-\delta) B^{\frac{2}{3}},\gamma_1(1+\delta)B^{\frac{2}{3}}]
\end{equation*}
and
\begin{equation*}
I_j=[\gamma_j\sqrt{1-\delta} B^{\frac{1}{3}},\gamma_j\sqrt{1+\delta}B^{\frac{1}{3}}]
\end{equation*}
for $2\leq j\leq 4$, where $B$ is a sufficiently large integer and $\delta$ is a fixed small positive constant. Furthermore, we set
\begin{equation*}
R_1(B)=\sum_{\stackrel{p_j\in I_j}{F_1(p_1,p_2,p_3,p_4)=0}}(\log p_1)(\log p_2)(\log p_3)(\log p_4),
\end{equation*}
which counts the weighted number of prime solutions to the equation $F(t_1,t_2,t_3,t_4)=0$ with $t_j\in I_j$ for $1\leq j\leq 4$. We apply the circle method to prove the following result.
\begin{lem}
\label{lem1}
For any $A>0$, we have
\begin{equation*}
R_1(B)=J(B)\mathfrak{S}+O(B(\log B)^{-A}),
\end{equation*}
where $J(B)$ is the number of integer solutions to the equation
\begin{equation*}
F(m_1,m_2,m_3,m_4)=0
\end{equation*}
with $m_j\in I_j$ for $1\leq j\leq 4$ and
\begin{equation*}
\mathfrak{S}= 2\prod_{p>2}\Big(1-\frac{1}{(p-1)^2}\Big).
\end{equation*}
Moreover, we have $J(B)\gg B$ and $\mathfrak{S}\gg1$.
\end{lem}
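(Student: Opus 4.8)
The plan is to run the Hardy--Littlewood circle method on the weighted count $R_1(B)$, exploiting the fact that the equation $2\beta_1 p_1 + \beta_2 p_2^2 + \beta_3 p_3 p_4 = 0$ has four prime variables and only one square/one product term, so that enough averaging is available. First I would introduce the exponential sums
\begin{equation*}
S_1(\alpha) = \sum_{p \in I_1} (\log p)\, e(2\beta_1 \alpha p), \quad S_2(\alpha) = \sum_{p \in I_2} (\log p)\, e(\beta_2 \alpha p^2),
\end{equation*}
and
\begin{equation*}
T(\alpha) = \sum_{p_3 \in I_3}\sum_{p_4 \in I_4} (\log p_3)(\log p_4)\, e(\beta_3 \alpha p_3 p_4),
\end{equation*}
so that by orthogonality $R_1(B) = \int_0^1 S_1(\alpha) S_2(\alpha) T(\alpha)\, d\alpha$. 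I would then dissect $[0,1)$ into major arcs $\mathfrak{M}$ (rationals $a/q$ with $q \le P_0 := (\log B)^{C}$ and $|\alpha - a/q| \le P_0 B^{-2/3}$, say) and minor arcs $\mathfrak{m}$, the scaling chosen so that on $\mathfrak{M}$ each of the three sums is well approximated by its main term.

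On the major arcs the standard machinery applies: using Siegel--Walfisz for $S_1$ and its quadratic analogue (Weyl-differencing plus Siegel--Walfisz, as in Hua or Vaughan) for $S_2$, together with a treatment of the bilinear sum $T(\alpha) = \sum_{p_4} (\log p_4)\, S_1^{(p_4)}(\beta_3 p_4 \alpha)$ where the inner sum is again a linear prime sum, one obtains the expected approximations with local densities. Assembling these gives the singular series $\mathfrak{S} = 2\prod_{p>2}(1 - (p-1)^{-2})$ (the factor $2$ and the restriction $p > 2$ coming from the $2\beta_1$ coefficient forcing a parity constraint mod $2$) times the singular integral, which is exactly $J(B)$, the count of integer solutions in the boxes; the error from the major arcs is $O(B(\log B)^{-A})$. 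That the singular series is absolutely convergent and $\mathfrak{S} \gg 1$ is routine, and $J(B) \gg B$ follows from a real-variable volume computation: since $\gamma_1,\dots,\gamma_4$ is a nonsingular real zero of $F_1$ with all $\gamma_j > 0$, the boxes $I_j$ of sidelengths $\asymp B^{2/3}, B^{1/3}, B^{1/3}, B^{1/3}$ contain an integer solution set of the expected size $\asymp B^{2/3} \cdot B^{1/3} \cdot B^{1/3} \cdot B^{1/3} / B^{4/3} = B^{2/3}$\,---\,wait, one must be careful here; the Jacobian of $F_1$ in the $p_4$ variable is $\beta_3 p_3 \asymp B^{1/3}$, so solving for $p_4$ in terms of $(p_1,p_2,p_3)$ loses a factor $B^{1/3}$, giving $J(B) \asymp B^{2/3+1/3+1/3} \cdot B^{-1/3}\cdot$\,---\,in any case a careful bookkeeping of the lattice-point count in the three free variables yields $J(B) \gg B$, matching the claim.

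The main obstacle will be the minor arc estimate $\int_{\mathfrak{m}} S_1 S_2 T\, d\alpha = O(B(\log B)^{-A})$, since the non-diagonal term $p_3 p_4$ must be handled carefully. The idea (following Liu) is to keep $T(\alpha)$ intact as a bilinear sum and exploit cancellation from the two coupled prime variables rather than from $S_1$ or $S_2$ alone. Concretely I would bound $\int_{\mathfrak{m}} |S_1 S_2 T|\, d\alpha \le \big(\sup_{\mathfrak{m}} |S_1|\big) \big(\int_0^1 |S_2|^2\, d\alpha\big)^{1/2} \big(\int_0^1 |T|^2\, d\alpha\big)^{1/2}$ after a Cauchy--Schwarz splitting, where the $L^2$ norms are controlled by elementary divisor-sum estimates (the mean square of $T$ counts solutions of $p_3 p_4 = p_3' p_4'$, giving $O(B^{2/3+\varepsilon})$, and the mean square of $S_2$ is $O(B^{1/3})$), while the pointwise bound $\sup_{\mathfrak{m}}|S_1(\alpha)| \ll B^{2/3} (\log B)^{-A'}$ comes from Vinogradov's estimate for linear exponential sums over primes with $q$ in the minor-arc range (or Vaughan's identity). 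One checks the arithmetic works out: $B^{2/3}(\log B)^{-A'} \cdot (B^{1/3})^{1/2} \cdot (B^{2/3+\varepsilon})^{1/2} = B^{2/3+1/6+1/3+\varepsilon}(\log B)^{-A'} = B^{7/6+\varepsilon}(\log B)^{-A'}$, which\,---\,if this exceeds $B$ one instead distributes the variables differently, e.g. applying the minor-arc bound to one of the factors inside $T$ by writing $T(\alpha) = \sum_{p_4} (\log p_4) S_1^{(p_4)}(\beta_3 p_4 \alpha)$ and using that for most $p_4$ the argument $\beta_3 p_4 \alpha$ is again minor-arc, so that $|T(\alpha)| \ll B^{1/3} \cdot B^{1/3}(\log B)^{-A'} = B^{2/3}(\log B)^{-A'}$ on $\mathfrak{m}$, and then $\int_{\mathfrak{m}}|S_1 S_2 T| \ll \sup_{\mathfrak{m}}|T| \cdot (\int |S_1|^2)^{1/2}(\int|S_2|^2)^{1/2} \ll B^{2/3}(\log B)^{-A'} \cdot (B^{2/3})^{1/2}(B^{1/3})^{1/2} = B^{2/3+1/3+1/6}(\log B)^{-A'}$. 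Getting the exponents to close at $B(\log B)^{-A}$ is the delicate part and will require the precise Liu-type argument that transfers the minor-arc saving through the bilinear structure; this bookkeeping, rather than any conceptual difficulty, is where the real work lies.
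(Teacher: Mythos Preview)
Your major-arc outline and singular-series computation are essentially what the paper does. The gap is in the minor-arc treatment: both of your Cauchy--Schwarz arrangements yield $B^{7/6}$ rather than $B$, and you then defer to an unspecified ``Liu-type argument'' as if further bilinear saving from $T$ were needed. It is not. The correct move, which you do not try, is to take the pointwise supremum over $S_2$, the \emph{quadratic} exponential sum over primes, rather than over $S_1$ or $T$. The standard Weyl-type bound for $\sum_p (\log p)\,e(\alpha p^2)$ on minor arcs (e.g.\ Ghosh, \emph{Proc.\ London Math.\ Soc.}\ (3) \textbf{42} (1981)) gives $\sup_{\alpha\in\mathfrak m}|S_2(\alpha)|\ll B^{1/3}(\log B)^{-A}$, and then
\[
\int_{\mathfrak m}|S_1 S_2 T|\,d\alpha \le \Bigl(\sup_{\mathfrak m}|S_2|\Bigr)\Bigl(\int_0^1|S_1|^2\,d\alpha\Bigr)^{1/2}\Bigl(\int_0^1|T|^2\,d\alpha\Bigr)^{1/2}.
\]
Here $\int_0^1|S_1|^2\,d\alpha\ll B^{2/3}$ by Parseval, and $\int_0^1|T|^2\,d\alpha\ll B^{2/3}$ since this counts prime solutions of $p_3p_4=p_3'p_4'$ (so no $B^\varepsilon$ is lost). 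The product is $B^{1/3}(\log B)^{-A}\cdot B^{1/3}\cdot B^{1/3}=B(\log B)^{-A}$, exactly what is needed. The reason your two arrangements both miss by $B^{1/6}$ is structural: $S_1$ and $T$ each have effective length $\asymp B^{2/3}$, so pairing them in $L^2$ is balanced, while $S_2$ is the short sum of length $B^{1/3}$ and is the natural place to spend the sup-norm saving.

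Your argument for $J(B)\gg B$ is also confused. The clean route is to fix odd integers $m_2\in I_2$, $m_3\in I_3$, $m_4\in I_4$ (giving $\asymp B$ triples), note that $\beta_2 m_2^2+\beta_3 m_3 m_4$ is then even and, by the relation $2\beta_1\gamma_1+\beta_2\gamma_2^2+\beta_3\gamma_3\gamma_4=0$, lies in $-2\beta_1 I_1$, so that $m_1$ is uniquely determined in $I_1$.
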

\begin{proof}
We set
\begin{equation*}
L=\log B,\qquad P=L^D, \qquad Q=B^{\frac{2}{3}}P^{-3},
\end{equation*}
where $D$ is a sufficiently large parameter to be chosen later. Furthermore, denote
\begin{equation*}
S_1(\alpha)=\sum_{p_1\in I_1}\log p_1 e(2\beta_1p_1\alpha ),\quad S_2(\alpha)=\sum_{p_2\in I_2}\log p_2 e(\beta_2p_2^2\alpha )
\end{equation*}
and
\begin{equation*}
S_3(\alpha)=\sum_{\stackrel{p_3\in I_3}{p_4\in I_4}}\log p_3\log p_4 e(\beta_3p_3p_4\alpha ).
\end{equation*}
Then the application of the circle method starts with the identity
\begin{equation*}
R_1(B)=\int_{\frac{1}{Q}}^{1+\frac{1}{Q}}S_1(\alpha)S_2(\alpha)S_3(\alpha)d \alpha.
\end{equation*}
By Dirichlet's lemma on rational approximation, we know that each $\alpha\in (\frac{1}{Q},1+\frac{1}{Q}]$ can be written in the form
\begin{equation*}
\alpha=\frac{a}{q}+\lambda,\quad |\lambda|<\frac{1}{qQ},
\end{equation*}
for some integers $a$, $q$ satisfying $1\leq a\leq q\leq Q$ and $(a,q)=1$. Define the sets of major and minor arcs as follows:
\begin{equation*}
\mathfrak{M}=\bigcup_{q\leq P}\bigcup_{\stackrel{1\leq a\leq q}{(a,q)=1}}\Big[\frac{a}{q}-\frac{1}{qQ},\frac{a}{q}+\frac{1}{qQ}\Big], \quad \mathfrak{m}=\big(\frac{1}{Q},1+\frac{1}{Q}\big]\setminus \mathfrak{M}.
\end{equation*}
Thus we get
\begin{equation*}
R_1(B)=\int_{\mathfrak{M}}S_1(\alpha)S_2(\alpha)S_3(\alpha)d \alpha+\int_{\mathfrak{m}}S_1(\alpha)S_2(\alpha)S_3(\alpha)d \alpha.
\end{equation*}
Now we estimate the integral over the major arcs. For any $\alpha\in\mathfrak{M}$, we know that there exist integers $a$ and $q$ satisfying
\begin{equation*}
\alpha=\frac{a}{q}+\lambda,\quad 1\leq q \leq P,\quad (a,q)=1 \quad \text{and}\quad |\lambda|<\frac{1}{qQ}.
\end{equation*}
Therefore, we get
\begin{equation*}
\begin{split}
S_1(\alpha)&=\sum_{p_1\in I_1}\log p_1 e\Big(\frac{2\beta_1ap_1}{q}\Big)e(2\beta_1\lambda p_1)\\
&=\frac{1}{\varphi(q)}\sum_{\chi\,  \text{mod}\,  q}\sum_{\stackrel{h_1=1}{(h_1,q)=1}}^{q}e\Big(\frac{2\beta_1ah_1}{q}\Big)\bar{\chi}(h_1)\sum_{p_1\in I_1}\log p_1\chi(p_1)e(2\beta_1\lambda p_1).
\end{split}
\end{equation*}
Let
\begin{equation*}
W_1(\chi,\lambda)=\sum_{m_1\in I_1}(\Lambda(m_1)\chi(m_1)-\delta_{\chi})e(2\beta_1\lambda m_1)
\end{equation*}
and
\begin{equation*}
\widehat{W}_1(\chi,\lambda)=\sum_{p_1\in I_1}\log p_1 \chi(p_1)e(2\beta_1\lambda p_1)-\sum_{m_1\in I_1}\delta_{\chi}e(2\beta_1\lambda m_1),
\end{equation*}
where $\delta_{\chi}=1$ or $0$ according as the character $\chi$ is principal or not. We obtain
\begin{equation}
\label{C1}
W_1(\chi,\lambda)-\widehat{W}_1(\chi,\lambda)=\sum_{k\geq2}\sum_{p_1^k\in I_1}\log p_1\chi(p_1^k)e(2\beta_1\lambda p_j^k)\ll B^{\frac{1}{3}}L.
\end{equation}
It follows that
\begin{equation}
\label{C2}
\begin{split}
&S_1(\alpha)-\frac{1}{\varphi(q)}\sum_{\stackrel{h_1=1}{(h_1,q)=1}}^{q}e\Big(\frac{2\beta_1ah_1}{q}\Big)\sum_{m_1\in I_1}e(2\beta_1\lambda m_1)\\
=&\frac{1}{\varphi(q)}\sum_{\chi\,\text{mod}\,q}\sum_{\stackrel{h_1=1}{(h_1,q)=1}}^q e\Big(\frac{2\beta_1ah_1}{q}\Big)\bar{\chi}(h_1)\widehat{W}_1(\chi,\lambda)\\
=&\frac{1}{\varphi(q)}\sum_{\chi\,\text{mod}\,q}\sum_{\stackrel{h_1=1}{(h_1,q)=1}}^q e\Big(\frac{2\beta_1ah_1}{q}\Big)\bar{\chi}(h_1)(\widehat{W}_1(\chi,\lambda)-W_1(\chi,\lambda))\\
&+\frac{1}{\varphi(q)}\sum_{\chi\,\text{mod}\,q}\sum_{\stackrel{h_1=1}{(h_1,q)=1}}^q e\Big(\frac{2\beta_1ah_1}{q}\Big)\bar{\chi}(h_1)W_1(\chi,\lambda).
\end{split}
\end{equation}
The main tool used here is the Siegel-Walfisz theorem. By using (6) in Perelli and Pintz \cite{P}, we see that
\begin{equation*}
\sum_{m_1\in I_1}\Lambda(m_1)\chi(m_1)=\sum_{m_1\in I_1}\delta_{\chi}+O(B^{\frac{2}{3}}L^{-2-5D}).
\end{equation*}
Therefore, integration by parts derives
\begin{equation*}
\begin{split}
W_1(\chi,\lambda)=&\int_{I_1}e(2\beta_1\lambda u)d\big( \sum_{m_1\leq u,m_1\in I_1}(\Lambda(m_1)\chi(m_1)-\delta_{\chi})\big)\\
\ll& \big|\sum_{m_1\in I_1}(\Lambda(m_1)\chi(m_1)-\delta_{\chi})\big|\\
&+\Big|\lambda\int_{I_1}e(2\beta_1\lambda u)\big( \sum_{m_1\leq u,m_1\in I_1}(\Lambda(m_1)\chi(m_1)-\delta_{\chi})\big) du\Big|\\
\ll& (1+|\lambda|B^{\frac{2}{3}})B^{\frac{2}{3}}L^{-2-5D}.
\end{split}
\end{equation*}
Thus we obtain
\begin{equation}
\label{C3}
W_1(\chi,\lambda)\ll (1+|\lambda|B^{\frac{2}{3}})B^{\frac{2}{3}}L^{-2-5D}\ll B^{\frac{2}{3}}L^{-2-2D}.
\end{equation}
Combining (\ref{C1}), (\ref{C2}) with (\ref{C3}), we get
\begin{equation}
\label{C4}
S_1(\alpha)=\frac{1}{\varphi(q)}\sum_{\stackrel{h_1=1}{(h_1,q)=1}}^{q}e\Big(\frac{2\beta_1ah_1}{q}\Big)\sum_{m_1\in I_1}e(2\beta_1\lambda m_1)+O(B^{\frac{2}{3}}L^{-2D}).
\end{equation}
Arguing similarly, we may get
\begin{equation}
\label{C5}
S_2(\alpha)=\frac{1}{\varphi(q)}\sum_{\stackrel{h_2=1}{(h_2,q)=1}}^qe\Big(\frac{\beta_2ah_2^2}{q}\Big)\sum_{m_2\in I_2}e(\beta_2\lambda m_2^2)+O(B^{\frac{1}{3}}L^{-2D}).
\end{equation}
Now we apply similar ideas as in Section 5 of \cite{Liu} to estimate $S_3(\alpha)$. For any fixed $p_3\in I_3$, we have
\begin{equation*}
\sum_{p_4\in I_4}\log p_4 e(\beta_3p_3p_4\alpha )=\frac{1}{\varphi(q)}\sum_{\stackrel{h_4=1}{(h_4,q)=1}}^{q}e\Big(\frac{\beta_3ap_3h_4}{q}\Big)\sum_{m_4\in I_4}e(\beta_3p_3m_4\lambda)+O(B^{\frac{1}{3}}L^{-2D}).
\end{equation*}
Then summing over $p_3\in I_3$, we get
\begin{equation*}
\begin{split}
S_3(\alpha)&=\frac{1}{\varphi(q)}\sum_{m_4\in I_4}\sum_{p_3\in I_3}\sum_{\stackrel{h_4=1}{(h_4,q)=1}}^{q}e\Big(\frac{\beta_3ap_3h_4}{q}\Big)\log p_3e(\beta_3p_3m_4\lambda)+O(B^{\frac{2}{3}}L^{-2D})\\
&=\frac{1}{\varphi^2(q)}\sum_{\stackrel{h_3=1}{(h_3,q)=1}}^{q}\sum_{\stackrel{h_4=1}{(h_4,q)=1}}^{q}e\Big(\frac{\beta_3ah_3h_4}{q}\Big)\sum_{{\stackrel{m_3\in I_3}{m_4\in I_4}}}e(\beta_3m_3m_4\lambda)+O(B^{\frac{2}{3}}L^{-2D})\\
&=\frac{\mu(q)}{\varphi(q)}\sum_{{\stackrel{m_3\in I_3}{m_4\in I_4}}}e(\beta_3m_3m_4\lambda)+O(B^{\frac{2}{3}}L^{-2D}).
\end{split}
\end{equation*}
Combining this with (\ref{C4}) and (\ref{C5}), we obtain
\begin{equation}
\label{34}
\begin{split}
&\int_{\mathfrak{M}}S_1(\alpha)S_2(\alpha)S_3(\alpha)d \alpha \\
=&\sum_{q\leq P}\frac{\mu(q)}{{\varphi}^3(q)}\sum_{\stackrel{a=1}{(a,q)=1}}^{q}\sum_{\stackrel{h_1=1}{(h_1,q)=1}}^{q}e\Big(\frac{2\beta_1ah_1}{q}\Big)\sum_{\stackrel{h_2=1}{(h_2,q)=1}}^{q}e\Big(\frac{\beta_2ah_2^2}{q}\Big)\\
&\times\int_{-\frac{1}{qQ}}^{\frac{1}{qQ}}\sum_{m_j\in I_j}e(\lambda F(m_1,m_2,m_3,m_4))d\lambda+O(BL^{-D}).
\end{split}
\end{equation}
Note that
\begin{equation*}
\begin{split}
&\int_{-\frac{1}{2}}^{\frac{1}{2}}\sum_{m_j\in I_j}e(\lambda F(m_1,m_2,m_3,m_4))d\lambda\\
&-\int_{-\frac{1}{qQ}}^{\frac{1}{qQ}}\sum_{m_j\in I_j}e(\lambda F(m_1,m_2,m_3,m_4))d\lambda\ll BL^{-D}.
\end{split}
\end{equation*}
Inserting this into (\ref{34}), we obtain
\begin{equation}
\label{majorarc}
\int_{\mathfrak{M}}S_1(\alpha)S_2(\alpha)S_3(\alpha)d \alpha=J(B)\mathfrak{S}(P)+O(BL^{-D}),
\end{equation}
where 
\begin{equation*}
\mathfrak{S}(P)=\sum\limits_{q\leq P}A(q)
\end{equation*}
and
\begin{equation}
\label{Aq} A(q)=\frac{\mu(q)}{{\varphi}^3(q)}\sum_{\stackrel{a=1}{(a,q)=1}}^{q}\sum_{\stackrel{h_1=1}{(h_1,q)=1}}^{q}e\Big(\frac{2\beta_1ah_1}{q}\Big)\sum_{\stackrel{h_2=1}{(h_2,q)=1}}^{q}e\Big(\frac{\beta_2ah_2^2}{q}\Big).
\end{equation}
Next we estimate the contribution of the integral over the minor arcs. For any $\alpha\in\mathfrak{m}$, there exist integers $a$ and $q$ such that
\begin{equation*}
P\leq q\leq Q,\quad (a,q)=1 \quad \text{and} \quad \Big| \alpha-\frac{a}{q}\Big|<\frac{1}{qQ}.
\end{equation*}
It follows from the non-trivial upper bound for the exponential sums over primes that for any $\alpha\in\mathfrak{m}$, we have
\begin{equation}
\label{minor}
S_2(\alpha)\ll B^{\frac{1}{3}}(\log B)^{-A},
\end{equation}
provided that $D$ is chosen to be sufficiently large. Such a result can be found in several references, for example see Theorem 2 in Ghosh \cite{Ghosh}. Moreover, it follows from the trivial estimate that we have
\begin{equation}
\label{Mean1}
\int_{\frac{1}{Q}}^{1+\frac{1}{Q}}|S_1(\alpha)|^2 d\alpha \ll B^{\frac{2}{3}}.
\end{equation}
Furthermore, we see that the term $\int_{\frac{1}{Q}}^{1+\frac{1}{Q}}|S_3(\alpha)|^2 d\alpha $ counts the weighted number of the solutions the equation
\begin{equation*}
  p_3p_4=p_3'p_4'
\end{equation*}
with $p_3,p_3'\in I_3$ and $p_4,p_4'\in I_4$. Then we have
\begin{equation}
\label{Mean2}
\int_{\frac{1}{Q}}^{1+\frac{1}{Q}}|S_3(\alpha)|^2 d\alpha \ll B^{\frac{2}{3}}.
\end{equation}
By Cauchy's inequality and (\ref{minor})--(\ref{Mean2}), we obtain
\begin{equation}
\label{minorarc}
\begin{split}
&\int_{\mathfrak{m}}S_1(\alpha)S_2(\alpha)S_3(\alpha)d \alpha\\
&\ll \sup_{\alpha\in \mathfrak{m}}S_2(\alpha)\Big(\int_{\frac{1}{Q}}^{1+\frac{1}{Q}}|S_1(\alpha)|^2\Big)^{\frac{1}{2}}\Big(\int_{\frac{1}{Q}}^{1+\frac{1}{Q}}|S_3(\alpha)|^2\Big)^{\frac{1}{2}}\\
&\ll BL^{-A}.
\end{split}
\end{equation}
Combining (\ref{majorarc}) and (\ref{minorarc}), we get
\begin{equation*}
R_1(B)=J(B)\mathfrak{S}(P)+O(BL^{-A}),
\end{equation*}
provided that the parameter $D$ is chosen to be sufficiently large in terms of $A$. For the singular series, we establish the upper bound for $A(q)$ defined as in (\ref{Aq}). Note that 
\begin{equation*}
  \sum_{\stackrel{h_1=1}{(h_1,q)=1}}^{q}e\Big(\frac{2\beta_1ah_1}{q}\Big)=  \left\{\begin{array}{ll}
\mu(q),\ & \text{if} \ 2\nmid q,\\
\mu(\frac{q}{2}),\ & \text{if} \ 2| q,
    \end{array}\right. \\
 \end{equation*}
under the assumption that $\mu(q)\neq0$ and $(a,q)=1$. Moreover, we have
\begin{equation*}
\sum_{\stackrel{a=1}{(a,q)=1}}^{q}\sum_{\stackrel{h_2=1}{(h_2,q)=1}}^{q}e\Big(\frac{\beta_2ah_2^2}{q}\Big)=\mu(q)\varphi(q).
\end{equation*}
Then it follows from the trivial estimate that
\begin{equation*}
|A(q)|\leq \frac{1}{{\varphi^2(q)}}.
\end{equation*}
Write
\begin{equation*}
\mathfrak{S}=\sum_{q=1}^{\infty}A(q).
\end{equation*}
We remark that $\mathfrak{S}$ can be viewed as the product of the local densities, i.e.
\begin{equation*}
\begin{split}
 \mathfrak{S}= & \prod_{p}\Big(1-\frac{1}{(p-1)^3}\sum_{a=1}^{p-1}\sum_{h_1=1}^{p-1}e\Big(\frac{2\beta_1ah_1}{p}\Big)\sum_{h_2=1}^{p-1}e\Big(\frac{\beta_2ah_2^2}{p}\Big)\Big) \\
     = &2\prod_{p>2}\Big(1-\frac{1}{(p-1)^2}\Big).
\end{split}
\end{equation*}
Hence we get $\mathfrak{S}\gg 1$. Then we obtain
\begin{equation*}
\mathfrak{S}(P)-\mathfrak{S}\ll\sum_{q>P}q^{-2+\epsilon}\ll L^{-\frac{D}{2}}.
\end{equation*}
Since $J(B)\ll B$, we get
\begin{equation*}
R_1(B)=J(B)\mathfrak{S}+O(B(\log B)^{-A}),
\end{equation*}
provided that $D$ is chosen to be sufficiently large in terms of $A$. Now we establish the lower bound for $J(B)$. For $j=2,3,4$, we restrict $m_j$ to be odd integers satisfying $m_j\in I_j$. Recall that
\begin{equation*}
2\beta_1\gamma_1+\beta_2\gamma_2^2+\beta_3\gamma_3\gamma_4=0,
\end{equation*}
thus we have $\beta_2m_2^2+\beta_3m_3m_4$ is even and
\begin{equation*}
\beta_2m_2^2+\beta_3m_3m_4\in[-2\beta_1\gamma_1(1-\delta) B^{\frac{2}{3}},-2\beta_1\gamma_1(1+\delta)B^{\frac{2}{3}}].
\end{equation*}
Since
\begin{equation*}
I_1=[\gamma_1 (1-\delta)B^{\frac{2}{3}},\gamma_1(1+\delta)B^{\frac{2}{3}}],
\end{equation*}
we deduce that
\begin{equation*}
J(B)\gg B.
\end{equation*}
Hence we prove the lemma.
\end{proof}
\section{Linear equations in primes}
In this section, we treat the linear equations (\ref{F2})--(\ref{F4}). We first record certain auxiliary results on linear equations in primes due to Green and Tao \cite{GT}.

Let $\mathbb{Z}_q=\mathbb{Z}/q\mathbb{Z}$ be the cyclic group of order $q$. Moreover, we let $\Lambda_{\mathbb{Z}_q}:\mathbb{Z}\rightarrow\mathbb{R}^{+}$ be the local von Mangoldt function, which is the $q$-periodic function defined by
\begin{equation*}
\Lambda_{\mathbb{Z}_q}(b)=
\begin{cases}
  \frac{q}{\phi(q)}, & \mbox{if } (b,q)=1, \\
  0, & \mbox{otherwise}.
\end{cases}
\end{equation*}
For any integer $N\geq1$, we write $[N]$ for the discrete interval $\{1,\dots,N\}$. Assume that $A$ is a finite nonempty set and $f:A\rightarrow \mathbb{C}$ is a function. We define the average of $f$ on $A$ to be
\begin{equation*}
  \mathbb{E}_{x\in A}f(x)=\frac{1}{|A|}\sum_{x\in A}f(x),
\end{equation*}
where $|A|$ is the cardinality of $A$. The following lemma is essentially a restatement of Theorem 1.8 of Green and Tao \cite{GT}, which counts the weighted number of prime solutions in a given range to a system of linear equations.
\begin{lem}
\label{LEP}
Assume the inverse Gowers-norm conjecture GI($s$) and the M\"{o}bius and nilsequences conjecture MN($s$) hold. Let $A=(a_{ij})$ be an $s\times t$ matrix of integers, where $s\leq t$. Assume the nondegeneracy conditions that $A$ has full rank $s$, and that the only element of the row-space of $A$ over $\mathbb{Q}$ with two or fewer nonzero entries is the zero vector. Let $N>1$, let $\mathbf{b}=(b_1,\dots,b_s)\in \mathbb{Z}^s$ be a vector in $A\mathbb{Z}^t=\{A\mathbf{x}:\mathbf{x}\in\mathbb{Z}^t\}$, and suppose that the coefficients $|a_{ij}|$ and the quantities $|b_i/N|$ are uniformly bounded by some constant $L$. Let $K\subset[-N,N]^t$ be convex. Then we have
\begin{equation}
\label{estimate}
  \sum_{\stackrel{\mathbf{x}\in K\cap \mathbb{Z}^t}{A\mathbf{x}=\mathbf{b}}}\prod_{i\in[t]}\Lambda(x_i)=\alpha_{\infty}\prod_p \alpha_p+o_{t,L,s}(N^{t-s}),
\end{equation}
where the local densities $\alpha_p$ are given by
\begin{equation*}
  \alpha_p:=\lim_{M\rightarrow\infty}\mathbb{E}_{x\in[-M,M]^t,A\mathbf{x}=\mathbf{b}}\prod_{i\in[t]}\Lambda_{\mathbb{Z}_p}(x_i)
\end{equation*}
and the global factor $\alpha_\infty$ is given by
\begin{equation*}
  \alpha_{\infty}:=\#\{\mathbf{x}\in\mathbb{Z}^t:\mathbf{x}\in K,A\mathbf{x}=\mathbf{b},x_i\geq0\}.
\end{equation*}
\end{lem}
For the case $s=1$, the conjectures GI($s$) and MN($s$) hold due to the work of Hardy-Littlewood and Vinogradov. While for $s=2$, the conjectures GI($s$) and MN($s$) are settled in \cite{GT2} and \cite{GT3}, respectively. Thus Lemma \ref{LEP} holds unconditionally for $s=1,2$. We apply Lemma \ref{LEP} to investigate prime solutions to particular linear equations.

Now we investigate the prime solutions to the equation (\ref{F2}). Let the polynomial  $F_2(t_1,t_2,t_3)\in\mathbb{Z}[t_1,t_2,t_3]$ be defined by
\begin{equation*}
F_2(t_1,t_2,t_3)=2\beta_1 t_1+\beta_2t_2+\beta_3t_3.
\end{equation*}
Suppose that $\gamma_1,\dots,\gamma_3$ are fixed positive real numbers, satisfying
\begin{equation}
\label{gamma}
F_2(\gamma_1,\gamma_2,\gamma_3)=2\beta_1\gamma_1+\beta_2\gamma_2+\beta_3\gamma_3=0.
\end{equation}
We set
\begin{equation*}
I_j=[\gamma_j(1-\delta) B^{\alpha},\gamma_j(1+\delta)B^{\alpha}]
\end{equation*}
for $1\leq j\leq 3$, where $B$ is a sufficiently large parameter, $\alpha\in(0,1)$ and $\delta$ is a fixed small positive constant. Furthermore, we set
\begin{equation*}
R_2(B)=\sum_{\stackrel{m_j\in I_j}{F_2(m_1,m_2,m_3)=0}}\Lambda (m_1)\Lambda (m_2)\Lambda (m_3),
\end{equation*}
which counts the weighted number of prime solutions to the equation (\ref{F2}) with $p_j\in I_j$ for $1\leq j\leq 3$. Then we choose $\mathbf{b}=\mathbf{0}$, $K=I_1\times I_2\times I_3$ and $A$ to be a $1\times3$ matrix $(2\beta_1\, \beta_2\, \beta_3)$ in Lemma \ref{LEP}. It follows that the estimate (\ref{estimate}) holds, i.e.
\begin{equation*}
  R_2(B)=\alpha_{\infty}\prod_p \alpha_p+o(B^{2\alpha}).
\end{equation*}
Since there is no local obstruction for the equation (\ref{F2}), the local density $\alpha_p$ is always positive for any prime $p$. Furthermore, we deduce from (\ref{gamma}) that the global factor $\alpha_\infty$ satisfies $\alpha_\infty\gg B^{2\alpha}$. Thus the inequality $R_2(B)\gg B^{2\alpha}$ holds.

Next we consider the prime solutions to the equation (\ref{F3}). Define $F_3(t_1,t_2,t_3,t_4)\in\mathbb{Z}[t_1,t_2,t_3,t_4]$ to be
\begin{equation*}
F_3(t_1,t_2,t_3,t_4)=\beta_1 t_1+\beta_2t_2+\beta_3t_3+\beta_4t_4.
\end{equation*}
Let $\gamma_1,\dots,\gamma_4$ be fixed positive real numbers with
\begin{equation*}
\beta_1\gamma_1+\beta_2\gamma_2+\beta_3\gamma_3+\beta_4\gamma_4=0.
\end{equation*}
For $1\leq j\leq 4$, we write
\begin{equation*}
I_j=[\gamma_j(1-\delta) B^{\frac{1}{3}},\gamma_j(1+\delta)B^{\frac{1}{3}}],
\end{equation*}
where $B$ is a sufficiently large parameter and $\delta$ is a fixed small positive constant. On writing $A=(\beta_1\, \beta_2\, \beta_3\,\beta_4)$, we argue similarly and deduce that
\begin{equation*}
R_3(B)=\sum_{\stackrel{m_j\in I_j}{F_3(m_1,m_2,m_3,m_4)=0}}\Lambda (m_1)\Lambda (m_2)\Lambda (m_3)\Lambda (m_4)\gg B.
\end{equation*}

Finally we consider prime solutions to the system of linear equations (\ref{F4}). Write
\begin{equation}
\label{F_45}
\begin{split}
   F_4(t_1,\dots,t_5)&=  \beta_1t_1+2\beta_4t_4-\beta_5t_5,\\
    F_5(t_1,\dots,t_5) &= \beta_2t_2+\beta_3t_3+2\beta_4t_4.
\end{split}
\end{equation}
Suppose that $\gamma_1,\dots,\gamma_5$ are fixed positive real numbers satisfying $F_4(\gamma_1,\dots,\gamma_5)=F_5(\gamma_1,\dots,\gamma_5)=0$. Set
\begin{equation}
\label{interval}
I_j=[\gamma_j(1-\delta) B^{\frac{1}{3}},\gamma_j(1+\delta)B^{\frac{1}{3}}]
\end{equation}
for $1\leq j\leq 5$, where $B$ is a sufficiently large parameter and $\delta$ is a fixed small positive constant. Let $A$ be a $2\times5$ matrix
\begin{gather*}
\begin{pmatrix}
  \beta_1 & 0 & 0 & 2\beta_4 & -\beta_5 \\
  0 & \beta_2 & \beta_3 & 2\beta_4 & 0
\end{pmatrix}.
\end{gather*}
Then it follows from Lemma \ref{LEP} that
\begin{equation}
\label{weighted}
R_4(B)=\sum_{\stackrel{m_j\in I_j}{F_4(m_1,\dots,m_5)=F_5(m_1,\dots,m_5)=0}}\Lambda (m_1)\dots\Lambda (m_5)\gg B.
\end{equation}
\section{Completion of the proof}
In this section, we establish the Zariski density of almost prime points on cubic surfaces and complete the proof of Theorem \ref{theorem}. Since the treatment for each $X_i$ is similar, we restrict our attention to the surface $X_2$ and only give the sketch of the proof for other cubic surfaces.

For the surface $X_2$, we recall the parametrisations in (\ref{para2}). We fix $\eta_{1}=\beta_1p_1$, $\eta_8=\beta_2p_2$, $\eta_9=\beta_3p_3$ and $\eta_{10}=2\beta_4p_4$ with $p_1,\dots,p_4$ distinct. Then the point given in (\ref{para2}) becomes
\begin{equation*}
\mathbf{x}=(x_0,x_1,x_2,x_3)=(2\beta_2\beta_3\beta_4p_2p_3p_4,\beta_2p_1^2p_2,\beta_3p_1^2p_3,p_1^2(\beta_1p_1+2\beta_4p_4))
\end{equation*}
with $\beta_2p_2+\beta_3p_3+2\beta_4p_4=0$. In order to obtain the upper bound for $r(X_2)$, we introduce the condition $\beta_1p_1+2\beta_4p_4-\beta_5p_5=0$. 
Given any real point $[\boldsymbol{\xi}_2]\in U_2$, we have
\begin{equation*}
  \xi_0(\xi_1+\xi_2+\xi_3)^2+\xi_1\xi_2(\xi_1+\xi_2)=0,
\end{equation*}
where $\xi_1\xi_2\neq0$ and $\xi_1+\xi_2+\xi_3\neq0$. Now we fix
\begin{equation*}
\begin{split}
   \gamma_1=|\sqrt[3]{\xi_1+\xi_2+\xi_3}&|,\,\gamma_2=\Big|\frac{\xi_1}{\sqrt[3]{(\xi_1+\xi_2+\xi_3)^2}}\Big|,\, \gamma_3=\Big|\frac{\xi_2}{\sqrt[3]{(\xi_1+\xi_2+\xi_3)^2}}\Big|,\\
   \gamma_4=&\Big|\frac{\xi_0\sqrt[3]{(\xi_1+\xi_2+\xi_3)^4}}{2\xi_1\xi_2}\Big|,\,\gamma_5=\Big|\frac{\xi_3}{\sqrt[3]{(\xi_1+\xi_2+\xi_3)^2}}\Big|
\end{split}
\end{equation*}
and
\begin{equation*}
\begin{split}
    \beta_1=\text{sgn}&(\sqrt[3]{\xi_1+\xi_2+\xi_3}),\, \beta_2=\text{sgn}(\xi_1),\, \beta_3=\text{sgn}(\xi_2),\\
\beta_4=&\text{sgn}\Big(\frac{\xi_0}{\xi_1\xi_2}\Big),\, \beta_5=\text{sgn}(\xi_3).
\end{split}
\end{equation*}
It is easy to check that the positive real numbers $\gamma_1,\dots,\gamma_5$ satisfy $F_4(\gamma_1,\dots,\gamma_5)=F_5(\gamma_1,\dots,\gamma_5)=0$, where $F_4$ and $F_5$ are defined as in (\ref{F_45}). Moreover, for $1\leq j\leq5$, we define the intervals $I_j$ as in (\ref{interval}). Then for $p_j\in I_j$ and $0\leq i\leq 3$, we obtain that
\begin{equation*}
  \Big|\frac{x_i}{B}-\xi_i\Big|< \delta f_i(\boldsymbol{\xi}_2),
\end{equation*}
where each $f_i(\boldsymbol{\xi}_2)$ is some nonnegative function in $\boldsymbol{\xi}_2$. Then we may choose $\delta$ to be sufficiently small, such that
\begin{equation*}
  \delta f_i(\boldsymbol{\xi}_2)<\varepsilon.
\end{equation*}
It follows from (\ref{weighted}) that for sufficiently large $B$, there exists a suitable positive constant $c$ such that there are at least $cB(\log B)^{-5}$ prime solutions to the equation (\ref{F4}) with $p_j\in I_j$. Among these solutions, there are at most $O(B^{\frac{2}{3}})$ solutions, for which $\mathbf{x}\notin\mathbb{Z}^4_{\text{prim}}$. Thus we deduce that the estimate (\ref{eq2}) holds.

For other cubic surfaces, we now present the parametrisations of the almost prime points.

Type $\mathbf{D}_4$, first isomorphy class. We set $\eta_1=\beta_1p_1$, $\eta_8=\beta_2p_2$, $\eta_9=\beta_3p_3$ and $\eta_{10}=\beta_4p_4$. Then we write
\begin{equation*}
\mathbf{x}=(x_0,x_1,x_2,x_3)=(\beta_2\beta_3\beta_{4}p_2p_3p_4,\beta_2p_1^2p_2,\beta_3p_1^2p_3,\beta_4p_1^2p_4),
\end{equation*}
where $\beta_2p_2+\beta_3p_3+\beta_4p_4-\beta_1p_1=0$. Then we have $[\mathbf{x}]\in X_1$ and $x_0x_1x_2x_3=P_{12}$.

Type $\mathbf{A_3+2A_1}$. We fix $\eta_1=\beta_1p_1$, $\eta_4=\beta_2p_2$, $\eta_8=2\beta_3p_3$, $\eta_{10}=\beta_4p_4$. Thus the point defined by
\begin{equation*}
\mathbf{x}=(x_0,x_1,x_2,x_3)=(\beta_1p_1p_4^2,2\beta_3p_2^2p_3,2\beta_1\beta_2\beta_3p_1p_2p_3,\beta_1\beta_2\beta_{4}p_1p_2p_4)
\end{equation*}
with $\beta_1p_1+\beta_2p_2+2\beta_3p_3=0$ satisfies $[\mathbf{x}]\in X_3$ and $x_0x_1x_2x_3=P_{14}$.

Type $\mathbf{2A_2+A_1}$. Let $\eta_5=\beta_1p_1$, $\eta_6=\beta_2p_2$, $\eta_{10}=2\beta_3p_3$ and $\eta_2=\beta_4p_4$. On writing
\begin{equation*}
\mathbf{x}=(x_0,x_1,x_2,x_3)=(\beta_1p_1p_2^2,\beta_4p_1^2p_4,2\beta_3p_3p_4^2,\beta_1\beta_2\beta_{4}p_1p_2p_4),
\end{equation*}
where $\beta_1p_1+\beta_2p_2+2\beta_3p_3=0$, we get $[\mathbf{x}]\in X_4$ and $x_0x_1x_2x_3=P_{13}$.

Type $\mathbf{A_4+A_1}$. We choose $\eta_5=\beta_1p_1$, $\eta_9=\beta_2p_2$, $\eta_{10}=2\beta_3p_3$ and $\eta_2=\beta_4p_4$. It follows that the point
\begin{equation*}
\mathbf{x}=(x_0,x_1,x_2,x_3)=(2\beta_1\beta_2\beta_{3}p_1p_2p_3,\beta_1p_1p_4^2,\beta_4p_4^3,\beta_1\beta_2\beta_{4}p_1p_2p_4)
\end{equation*}
with $\beta_1p_1+\beta_2p_2+2\beta_3p_3=0$ satisfies $[\mathbf{x}]\in X_5$ and $x_0x_1x_2x_3=P_{13}$.

Type $\mathbf{D_5}$. The point $\mathbf{x}$ defined by
\begin{equation*}
\mathbf{x}=(x_0,x_1,x_2,x_3)=(\beta_1p_1^3,\beta_3p_1^2p_3,\beta_2p_1^2p_2,2\beta_2\beta_{4}p_2p_4)
\end{equation*}
with $2\beta_4p_4+\beta_1\beta_2p_1p_2+p_3^2=0$ satisfies $[\mathbf{x}]\in X_6$ and $x_0x_1x_2x_3=P_{12}$.

Type $\mathbf{A_5+A_1}$. We take $\eta_6=2\beta_1p_1$, $\eta_8=\beta_2p_2$, $\eta_9=\beta_3p_3$ and $\eta_10=\beta_4p_4$. Write
\begin{equation*}
\mathbf{x}=(x_0,x_1,x_2,x_3)=(2\beta_1\beta_4p_1p_4,2\beta_1\beta_2p_1p_2,\beta_2p_2^3,2\beta_1\beta_3p_1p_3),
\end{equation*}
where $2\beta_1p_1+p_3^2+\beta_2\beta_4p_2p_4=0$. Then we obtain $[\mathbf{x}]\in X_7$ and $x_0x_1x_2x_3=P_{12}$.

Type $\mathbf{E_6}$. Fix $\eta_2=\beta_1p_1$, $\eta_3=\beta_2p_2$, $\eta_{10}=2\beta_3p_3$ and $\tilde{\eta}_{6}=\beta_4p_4$. Then the point defined by
\begin{equation*}
\mathbf{x}=(x_0,x_1,x_2,x_3)=(\beta_4p_1^2p_2^2p_4^3,2\beta_3p_3p_4^6,\beta_1p_1^3p_2^4,\beta_2p_1^2p_2^3p_4^2)
\end{equation*}
with $2\beta_3p_3+\beta_1p_1+\beta_2p_2=0$. We see that $[\mathbf{x}]$ lies on $X_8$ and $x_0x_1x_2x_3=P_{29}$.

Consequently, we apply the result on the number of prime solutions to the equation (\ref{F3}) and (\ref{F2}) to investigate almost prime points on $X_1$ and $X_3,X_4,X_5,X_8$, respectively. Moreover, we consider almost prime points on $X_6$ and $X_7$ by using Lemma \ref{lem1}. Arguing similarly as in the treatment for $X_2$, we may establish the Zariski density for almost prime points on each singular cubic surface. Then the proof of Theorem \ref{theorem} is concluded.


\begin{thebibliography}{00}

\bibitem{BaiD} S. Baier and U. Derenthal,
Quadratic congruences on average and rational points on cubic surfaces,
Acta Arith. 171(2015), no.2, 145--171.

\bibitem{BM} V.V. Batyrev, Yu.I. Manin,
Sur le nombre des points rationnels de hauteur born\'{e} des vari\'{e}t\'{e}s alg\'{e}briques,
Math. Ann. 286(1990), no.1--3, 27--43.


\bibitem{BGS} J. Bourgain, A. Gamburd and P. Sarnak,
Affine linear sieve, expanders, and sum-product,
Invent. Math. 179(2010), no.3, 559--644.

\bibitem{BBD} R. de la Bret\`{e}che, T.D. Browning and U. Derenthal,
On Manin's conjecture for a certain singular cubic surface,
Ann. Sci. \'{E}cole Norm. Sup. (4)40(2007), no.1, 1--50.

\bibitem{Bro} T.D. Browning,
The density of rational points on a certain singular cubic surface,
J. Number Theory 119(2006), no.2, 242--283.

\bibitem{BD} T.D. Browning and U. Derenthal,
Manin's conjecture for a cubic surface with $\mathbf{D}_5$ singularity,
Int. Math. Res. Not. IMRN (2009), no.14, 2620--2647.

\bibitem{BG} T.D. Browning and A. Gorodnik,
Power-free values of polynomials on symmetric varieties,
Proc. Lond. Math. Soc. (3)114(2017), no.6, 1044--1080.

\bibitem{BW} J.W. Bruce and C.T.C. Wall,
On the classification of cubic surfaces,
J. London Math. Soc. (2)19(1979), no.2, 245--256.

\bibitem{Cayley} A. Cayley,
A Memoir on Quartic Surfaces,
Philos. Trans. R. Soc. Lond. 159 (1869) 231--326.

\bibitem{Chen} J.R. Chen,
On the representation of a larger even integer as the sum of a prime and the product of at most two primes,
Sci. Sinica 16(1973), 157--176.

\bibitem{CM} B. Cook and \'{A}. Magyar,
Diophantine equations in the primes,
Invent. Math. 198(2014), no.3, 701--737.

\bibitem{Der} U. Derenthal,
Singular del Pezzo surfaces whose universal torsors are hypersurfaces,
Proc. Lond. Math. Soc. (3)108(2014), no.3, 638--681.

\bibitem{Frei} C. Frei,
Counting rational points over number fields on a singular cubic surface,
Algebra Number Theory 7(2013), no.6, 1451--1479.

\bibitem{Ghosh} A. Ghosh,
The distribution of {$\alpha p\sp{2}$} modulo {$1$},
Proc. London Math. Soc. (3)42(1981), no.2, 252--269.

\bibitem{Green} B. Green,
Quadratic forms in $8$ prime variables,
arXiv preprint, arXiv:2108.10401 (2021).

\bibitem{GT2} B. Green and T. Tao,
An inverse theorem for the Gowers $U^3(G)$ norm,
Proc. Edinb. Math. Soc. (2)51(2008), no.1, 73--153.

\bibitem{GT3} B. Green and T. Tao,
Quadratic uniformity of the M\"{o}bius function,
Ann. Inst. Fourier (Grenoble) 58(2008), no.6, 1863--1935.

\bibitem{GT} B. Green and T. Tao,
Linear equations in primes,
Ann. of Math. (2)171(2010), no.3, 1753--1850.

\bibitem{HK} J.Z. Hong and A. Kontorovich,
Almost prime coordinates for anisotropic and thin pythagorean orbits,
Israel J. Math. 209(2015), no.1, 397--420.

\bibitem{Hua} L.K. Hua,
Some results in the additive prime-number theory,
Quart. J. Math. Oxford Ser. (2)9(1938), no.1, 68--80.

\bibitem{KW} K. Kawada and T.D. Wooley,
On the Waring-Goldbach problem for fourth and fifth powers,
Proc. London Math. Soc. (3), 83(2001), 1--50.

\bibitem{LeB2} P. Le Boudec,
Manin's conjecture for a cubic surface with $2\mathbf{A}_2+\mathbf{A}_1$ singularity type,
Math. Proc. Cambridge Philos. Soc. 153(2012), no.3, 419--455.

\bibitem{LeB} P. Le Boudec,
Affine congruences and rational points on a certain cubic surface,
Algebra Number Theory 8(2014), no.5, 1259--1296.

\bibitem{Liu} J.Y. Liu,
Integral points on quadrics with prime coordinates,
Monatsh. Math. 164(2011), no.4, 439--465.

\bibitem{LS} J.Y. Liu and P. Sarnak,
Integral points on quadrics in three variables whose coordinates have few prime factors,
Israel J. Math. 178(2010), 393--426.

\bibitem{LiuZhao} J.Y. Liu and L.L. Zhao,
On forms in prime variables,
Trans. Amer. Math. Soc., doi.org/10.1090/tran/9009.

\bibitem{NS} A. Nevo, P. Sarnak,
Prime and almost prime integral points on principal homogeneous spaces,
Acta Math. 205(2010), no.2, 361--402.

\bibitem{P} A. Perelli and J. Pintz,
On the exceptional set for Goldbach's problem in short intervals,
J. London Math. Soc. (2)47(1993), no.1, 41--49.

\bibitem{Sch} L. Schl\"{a}fli,
On the distribution of surfaces of the third order into species, in reference to the absence or presence of singular points, and the reality of their lines,
Philos. Trans. R. Soc. Lond. 153 (1863) 193--241.

\bibitem{SW} E.Sofos and Y.C. Wang,
Finite saturation for unirational varieties,
Int. Math. Res. Not. IMRN (2019), no.15, 4784--4821.

\bibitem{Vino} I.M. Vinogradov,
Representation of an odd number as the sum of three primes,
Dokl. Akad. Nauk. SSSR 15(1937), 291--294.

\bibitem{Wang} Y.C. Wang,
On the saturation number for cubic surfaces,
J. Number Theory 156(2015), 52--74.


\bibitem{Y1} S. Yamagishi,
Diophantine equations in primes: density of prime points on affine hypersurfaces,
Duke Math. J. 171(2022), no.4, 831--884.


\bibitem{Zhao} L. Zhao,
On the Waring-Goldbach problem for fourth and sixth powers,
Proc. London Math. Soc. (6), 108(2014), 1593--1622.

\bibitem{Zhao2} L. Zhao,
The quadratic form in nine prime variables,
Nagoya Mathematical Journal 223 (2016), 21--65.

\end{thebibliography}
\end{document}